\newtheorem{theorem}{Theorem}
\newtheorem{lemma}{Lemma}
\newtheorem{claim}{Claim}
\newtheorem{case}{Case}
\author[Jiafu He et al.]{Jiafu He
  \and Haiyu Zeng
  \and Yanbo Zhang}
\title{Ramsey goodness of stars and fans for the Haj\'os graph}
\affiliation{
  School of Mathematical Sciences, Hebei Normal University, Shijiazhuang, China\\
  Hebei Research Center of the Basic Discipline Pure Mathematics, Shijiazhuang, China}
\keywords{Ramsey goodness, the Haj\'os graph, fan}
\begin{document}
\publicationdata
{vol.27:3}
{2025}
{11}
{10.46298/dmtcs.15817}
{2025-06-10; None}
{2025-09-08}

\maketitle
\begin{abstract}
  Given two graphs $G_1$ and $G_2$, the Ramsey number $R(G_1,G_2)$ denotes the smallest integer $N$ such that any red-blue coloring of the edges of $K_N$ contains either a red $G_1$ or a blue $G_2$. Let $G_1$ be a graph with chromatic number $\chi$ and chromatic surplus $s$, and let $G_2$ be a connected graph with $n$ vertices. 
  The graph $G_2$ is said to be Ramsey-good for the graph $G_1$ (or simply $G_1$-good) if, for $n \ge s$,
  \[R(G_1,G_2)=(\chi-1)(n-1)+s.\]
  
  The $G_1$-good property has been extensively studied for star-like graphs when $G_1$ is a graph with $\chi(G_1)\ge 3$, as seen in works by Burr-Faudree-Rousseau-Schelp (\textit{J. Graph Theory}, 1983), Li-Rousseau (\textit{J. Graph Theory}, 1996), Lin-Li-Dong (\textit{European J. Combin.}, 2010), Fox-He-Wigderson (\textit{Adv. Combin.}, 2023), and Liu-Li (\textit{J. Graph Theory}, 2025), among others. However, all prior results require $G_1$ to have chromatic surplus~$1$. In this paper, we extend this investigation to graphs with chromatic surplus 2 by considering the Haj\'os graph $H_a$. For a star $K_{1,n}$, we prove that $K_{1,n}$ is $H_a$-good if and only if $n$ is even. For a fan $F_n$ with $n\ge 111$, we prove that $F_n$ is $H_a$-good.
\end{abstract}

\section{Introduction}
The Ramsey number is a bivariate function that assigns a positive integer $R(G_1,G_2)$ to every pair of simple graphs $G_1$ and $G_2$. It is defined as the smallest integer $N$ such that any graph $\Gamma$ on $N$ vertices contains $G_1$ as a subgraph, or its complement $\overline{\Gamma}$ contains $G_2$ as a subgraph.

The study of Ramsey numbers for sparse graphs has flourished since the 1970s. At that time, \cite{Burr1981} established a general lower bound for the Ramsey number of any pair of graphs. Suppose $G_2$ is a connected graph, and let $\chi(G_1)$ and $s(G_1)$ denote the chromatic number and the chromatic surplus of $G_1$, respectively, where the chromatic surplus refers to the smallest size of a color class over all proper $\chi(G_1)$-colorings of $G_1$. Then, if $|G_2|\ge s(G_1)$,
\begin{equation}\label{equal:basic}
	R(G_1,G_2)\ge (\chi(G_1)-1)(|G_2|-1)+s(G_1).
\end{equation}
When equality holds in~\eqref{equal:basic}, the graph $G_2$ is said to be \emph{$G_1$-good}, or equivalently, $G_2$ is \emph{Ramsey-good} for $G_1$. In the special case where $G_1$ is the complete graph $K_k$, such graphs $G_2$ are referred to as \emph{$k$-good} by \cite{BurrErdos1983}.

When $G_1$ is the complete $(k+1)$-partite graph $K_{1,m_1,m_2,\ldots,m_k}$ and $G_2$ is a star, \cite{Burr1983} proved that for $m=\min\{m_i\mid i\in [k]\}$ and sufficiently large $n$,
\[R(K_{1,m_1,m_2, \ldots,m_k},K_{1,n})=\begin{cases}
	k \cdot (n+m-2)+1 & \text{if both } m \text{ and } n \text{ are even}, \\
	k \cdot (n+m-1)+1 & \text{otherwise}.
\end{cases}\]
It follows that $K_{1,n}$ is $K_{1,m_1,m_2,\ldots,m_k}$-good only when $m_1=1$, or when $m_1=2$ and $n$ is even.

A frequently studied class of star-like graphs is the fan. A fan $F_n$ is the graph formed by $n$ triangles sharing a common vertex; see Figure~\ref{Fig:HajosFan}. The concept of the fan was first introduced into extremal graph theory by \cite{Erdos1995}, who studied its extremal graph and Tur\'an number. For results on the Ramsey numbers of fans, see~\cite{Chen2021,Dvorak2023,Lin2009,Zhang2015}.

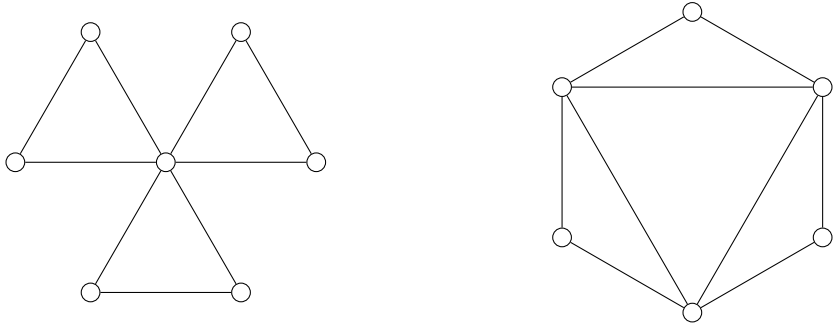
\begin{figure}[ht]
	\centering
	\begin{tikzpicture}
		
		\begin{scope}[xshift=-3.5cm]
			\foreach \i in {1,2,3,4,5,6} {
				\node[draw, circle, fill=white, inner sep=2.5pt] (f\i) at (120-60*\i:2cm) {};
			}
			
			\node[draw, circle, fill=white, inner sep=2.5pt] (f7) at (0,0) {};
			
			\draw (f1) -- (f2);
			\draw (f3) -- (f4);
			\draw (f5) -- (f6);
			
			\foreach \i in {1,2,3,4,5,6} {
				\draw (f7) -- (f\i);
			}
		\end{scope}
		
		\begin{scope}[xshift=3.5cm]
			\foreach \i in {1,2,3,4,5,6} {
				\node[draw, circle, fill=white, inner sep=2.5pt] (h\i) at (90-60*\i:2cm) {};
			}
			
			\foreach \i in {1,2,3,4,5,6} {
				\pgfmathtruncatemacro{\j}{mod(\i,6)+1}
				\draw (h\i) -- (h\j);
			}
			
			\draw (h1) -- (h3);
			\draw (h3) -- (h5);
			\draw (h1) -- (h5);
		\end{scope}
		
	\end{tikzpicture}
	\caption{The fan $F_3$ (left) and the Haj\'os graph (right)}
	\label{Fig:HajosFan}
\end{figure}

The concept of a fan can be extended to that of a generalized fan. Given a graph $H$, let $nH$ denote the disjoint union of $n$ copies of $H$, and let $H_1+H_2$ denote the join of two disjoint graphs $H_1$ and $H_2$, obtained by adding all edges between $V(H_1)$ and $V(H_2)$. The graph $K_1+nH$ is called a generalized fan. In particular, if $H$ is $K_1$, then $K_1+nH$ is the star $K_{1,n}$; if $H$ is $K_2$, then $K_1+nH$ is the fan $F_n$.

\cite{Li1996} investigated the Ramsey-goodness of generalized fans. For any graphs $H$ and $G$, they proved that $K_1+nH$ is Ramsey-good for $K_2+G$ when $n$ is sufficiently large.

Observe that in any proper coloring of the graph $K_2+G$, there are two color classes containing only one vertex each. What happens if we require only one color class to have a single vertex? Let $K_1+K_k(m)$ denote the complete $(k+1)$-partite graph in which one partite set has size $1$ and each of the remaining $k$ partite sets has size $m$. \cite{Lin2010} showed that for $k\ge 2$, if $m$ is odd or if $n|H|$ is odd, then
\[
R(K_1+K_k(m),K_1+nH)=k(n|H|+m-1)+1.
\]
It follows that whenever $m\ge 2$, the graph $K_1+nH$ is not Ramsey-good for $K_1+K_k(m)$ in this setting.

Subsequently, without relying on the Erd\H{o}s-Stone-Simonovits stability theorem, \cite{Chung2022} determined the exact value of $R(K_{1,m_1,m_2,\ldots,m_k},K_1+nH)$ for large $n$ in all cases. Let $m=\min\{m_i\mid i\in [k]\}$. Then
\[
r(K_{1,m_1,m_2, \ldots,m_k}, K_1+nH)=
\begin{cases}
	k \cdot (n|H|+m-2)+1 & \text{if both } m \text{ and } n|H| \text{ are even}, \\
	k \cdot (n|H|+m-1)+1 & \text{otherwise}.
\end{cases}
\]

Another frequently studied star-like graph is the book $B_{k,n}$, which is the graph $K_k+(n-k)K_1$. Equivalently, $B_{k,n}$ can be viewed as the graph obtained by blowing up the center of the star $K_{1,n-k}$ into a clique $K_k$, while preserving the adjacency between each vertex of $K_k$ and the remaining vertices. For results concerning the Ramsey goodness of $B_{k,n}$, we refer the reader to~\cite{Fox2023,Nikiforov2004,Liu2025}. For results on the Ramsey non-goodness of $B_{k,n}$, see~\cite{Fan2023,Fan2024,Lin2021}.

It is easy to observe that in all the aforementioned works, when $G_2$ is a star or a star-like graph, the corresponding graph $G_1$ is always required to have chromatic surplus $1$. We extend these results by considering $G_1$ to be the Haj\'os graph $H_a$, which has chromatic surplus $2$. The Haj\'os graph, named after the Hungarian mathematician Gy\"orgy Haj\'os, is a graph consisting of six vertices and nine edges. It is constructed by starting with a triangle and, for each of its edges, adding a new vertex and joining it to both endpoints of that edge; see Figure~\ref{Fig:HajosFan}.

In this paper, we establish two main results concerning Ramsey goodness for the Haj\'os graph.

Our first result establishes that the star $K_{1,n}$ is $H_a$-good if and only if $n$ is even.

\begin{theorem}\label{thm:star}
	$R(H_a,K_{1,n})=\begin{cases}
		2n+2 & \text{for even } n\ge 2, \\
		2n+3 & \text{for odd } n\ge 3.
	\end{cases}$
\end{theorem}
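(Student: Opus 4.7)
The plan is to prove the lower and upper bounds separately in each parity of $n$, using explicit constructions for the former and a common-neighborhood argument for the latter. Throughout, write $C_{uv} = N(u) \cap N(v)$ for the common neighborhood of two vertices $u,v$.

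For the lower bound I will exhibit a red-blue coloring of $K_{R(H_a,K_{1,n})-1}$ with neither red $H_a$ nor blue $K_{1,n}$. When $n$ is even, take the red graph $\Gamma_e = K_1 + K_{n,n}$ on $2n+1$ vertices: every triangle of $\Gamma_e$ passes through the apex (since $K_{n,n}$ is triangle-free), so in any would-be embedding of $H_a$ the central triangle must contain the apex, but the edge of that central triangle opposite to the apex then lies in no other triangle, making the third external vertex impossible. Since $\delta(\Gamma_e) = n+1$, the complementary blue graph has maximum degree $n-1$ and contains no $K_{1,n}$. When $n$ is odd, take $\Gamma_o = K_{n+1,n+1}$ plus a perfect matching inside each of the two parts (they exist because $n+1$ is even). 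Every triangle of $\Gamma_o$ uses exactly one matching edge, and a short check shows that the two non-matching edges of such a triangle each lie in exactly one other triangle, both of which share the same third vertex (the matching-partner of the opposite-part vertex of the triangle). Hence in any attempted $H_a$ two of the three external vertices coincide. Since $\delta(\Gamma_o) = n+2$, the complementary blue graph has maximum degree $n-1$.

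For the upper bound, consider any $2$-coloring of $K_N$, with $N=2n+2$ if $n$ is even and $N=2n+3$ if $n$ is odd, containing no blue $K_{1,n}$; the red graph $G$ then satisfies $\delta(G) \ge N-n$. The standard inequality $|C_{uv}| \ge 2\delta(G)-N$ gives at least $2$ common neighbors per edge in the even case and at least $3$ in the odd case. Starting from any triangle $xyz$ of $G$, I will seek external vertices $a \in C_{xy}\setminus\{z\}$, $b \in C_{xz}\setminus\{y\}$ and $c \in C_{yz}\setminus\{x\}$, all distinct; together with $\{x,y,z\}$ they span an $H_a$. Hall's theorem supplies such a system of distinct representatives (SDR) as soon as the three sets are not jointly contained in a set of size at most $2$.

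The odd case is comparatively smooth because each of the three sets already has size $\ge 2$: whenever the SDR fails, the three sets are forced to be a common pair $\{p,q\}$, so $p$ and $q$ are common neighbors of all three edges of $xyz$. Shifting the central triangle to $xyp$ and repeating, either the SDR now succeeds (yielding $H_a$), or the chain of forced equalities $N(x)\cup N(y) = N(x)\cup N(p) = N(y)\cup N(p) = V(G)$ collapses $V(G)$ into $\{x,y,z,p,q\}$, contradicting $|V(G)| = 2n+3 \ge 9$ for $n \ge 3$. The main obstacle is the even case, where each set may be a singleton and Hall's condition is fragile. Here, whenever the SDR fails at $xyz$, the failure produces a common neighbor $p$ of at least two edges of $xyz$, creating a $K_4$ on $\{x,y,z,p\}$. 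I then try each of the four triangles of this $K_4$ as a replacement central triangle, using the equality case $|C_{uv}|=2 \Rightarrow \deg(u)=\deg(v)=n+2$ and $N(u)\cup N(v)=V(G)$ to cascade strong constraints. If none of these triangles admits a valid SDR, the chained equalities force $V(G) \subseteq \{x,y,z,p\}$, contradicting $|V(G)| = 2n+2 \ge 6$. Executing this case analysis cleanly and handling all failure modes of Hall's condition is the technical heart of the proof.
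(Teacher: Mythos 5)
Your lower-bound constructions are fine and coincide with the paper's (the even case is the generic Burr construction $K_1+K_{n,n}$ made explicit; the odd case is exactly the paper's $(\ell K_2)+(\ell K_2)$). The upper bound, however, has a genuine gap in the even case, and it is fatal as sketched. Your argument there uses only that $|V(G)|=2n+2$, $\delta(G)\ge n+2$, and the resulting bound $|C_{uv}|\ge 2$ together with its equality case; it never uses the evenness of $n$. But the graph $\Gamma_o$ from your own odd-case lower bound ($K_{n+1,n+1}$ plus a perfect matching inside each part, on $2n+2$ vertices) satisfies all of those hypotheses, contains $K_4$'s, and is $H_a$-free. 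In it, for a $K_4$ on $\{\ell_1,\ell_2,r_1,r_2\}$ (two matching edges), every one of the four triangles fails Hall's condition because two of the three sets are the same singleton, e.g.\ $C_{\ell_1 r_1}\setminus\{\ell_2\}=C_{\ell_2 r_1}\setminus\{\ell_1\}=\{r_2\}$, while the third set is huge --- yet $V(G)$ is certainly not contained in the four vertices of the $K_4$. So your claimed implication ``all four triangles of the $K_4$ fail the SDR $\Rightarrow V(G)\subseteq\{x,y,z,p\}$'' is simply false, and no cascade of the equalities $N(u)\cup N(v)=V(G)$ can rescue it, since any parity-free argument would ``prove'' $R(H_a,K_{1,n})\le 2n+2$ for odd $n$ as well, contradicting your own construction. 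The parity must enter somewhere. The paper's route is: in the stuck configuration one pins down $|N(w_1)\cap N(w_2)|=n+1$ exactly, and since $R(K_{1,2},K_{1,n})=n+1$ for even $n$ (this is where parity is used), the red graph induced on that common neighbourhood contains a path on three vertices, producing a $K_5-e$; graphs containing $K_5-e$ are then disposed of by a separate direct argument. Your proposal contains neither the $K_5-e$ case split nor any step where the parity of $n$ does work, so the technical heart you defer is precisely the part that is missing.

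A secondary, fixable issue: in the odd case your concluding step is also not a valid deduction --- $N(x)\cup N(y)=V(G)$ says every vertex is adjacent to $x$ or $y$ and does not ``collapse'' $V(G)$ into five vertices. What the forced equalities actually give is a $K_5$ on $\{x,y,z,p,q\}$ in which each pair's common neighbourhood is exactly the complementary triple; then every vertex outside the $K_5$ is adjacent to exactly one vertex of each of the ten pairs, so if it has $k$ neighbours in the $K_5$ then $k(5-k)=10$, which has no integer solution. With that repair the odd case goes through, and it is in fact simpler than the paper's treatment (which instead splits on $K_5-e$ and uses a short common-neighbour count); but the even case needs a new idea.
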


Our second result determines that $F_n$ is $H_a$-good for $n \geq 111$.

\begin{theorem}\label{thm:fan}
	$R(H_a,F_n)=4n+2$ for $n\ge 111$.
\end{theorem}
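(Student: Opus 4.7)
For the lower bound $R(H_a,F_n)\geq 4n+2$, I would exhibit the standard Burr-type construction on $K_{4n+1}$: partition the vertex set as $V_1\sqcup V_2\sqcup\{v_0\}$ with $|V_1|=|V_2|=2n$, color all edges inside $V_1$ and inside $V_2$ blue, and all remaining edges red. The red graph is complete tripartite with a singleton part, and since any proper $3$-coloring of $H_a$ uses each color class exactly twice, the tripartite structure precludes $H_a$ as a subgraph; the blue graph is two disjoint copies of $K_{2n}$, whose components are too small to contain the connected graph $F_n$ on $2n+1$ vertices.

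For the upper bound I would argue by contradiction: suppose a $2$-coloring of $K_{4n+2}$ avoids both a red $H_a$ and a blue $F_n$. The main input is the known equality $R(K_3,F_n)=4n+1$ for large $n$. Since $4n+2>R(K_3,F_n)$ and blue $F_n$ is forbidden, every $(4n+1)$-vertex subset of $V$ contains a red triangle; in particular the collection of red triangles has no common vertex. Fix a red triangle $T=v_1v_2v_3$, and for each pair $\{i,j\}\subseteq\{1,2,3\}$ set $W_{ij}=\{u\in V\setminus T:uv_i\text{ and }uv_j\text{ are both red}\}$. If $(W_{12},W_{13},W_{23})$ admitted a system of distinct representatives then, together with $T$, those representatives would form a red $H_a$; so by Hall's theorem one of the following deficient cases must hold: (i) some $W_{ij}=\emptyset$; (ii) $|W_{ij}\cup W_{ik}|\leq 1$ for some pair; or (iii) $|W_{12}\cup W_{13}\cup W_{23}|\leq 2$. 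In every case the deficiency forces many blue edges between $T$ and $V\setminus T$; e.g.\ in case (i) with $W_{12}=\emptyset$, every vertex of $V\setminus T$ is blue-joined to $v_1$ or $v_2$, giving $d_B(v_1)+d_B(v_2)\geq 4n-1$ and therefore $\max(d_B(v_1),d_B(v_2))\geq 2n$.

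In parallel I would exploit the no-blue-$F_n$ assumption in the following form: whenever a vertex $v$ has $d_B(v)\geq 2n$, the blue subgraph on $N_B(v)$ has matching number at most $n-1$, hence a vertex cover of size at most $2(n-1)$ by the Gallai inequality, whence $N_B(v)$ contains a red clique of size at least $d_B(v)-2n+2$. Applying this to the high-blue-degree $v_i\in T$ produced above yields a red triangle $T$ together with a red clique sitting inside $N_B(v_i)$. The plan is then to use this structure, together with the many auxiliary red triangles supplied by $R(K_3,F_n)=4n+1$ (applied to various $(4n+1)$-subsets that avoid chosen vertices), to locate an auxiliary red triangle $T'$ whose common-red-neighborhood triple $(W'_{12},W'_{13},W'_{23})$ does admit an SDR, producing the red $H_a$ we need for a contradiction.

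I expect the main obstacle to be the most restrictive Hall case (iii), where at most two vertices of $V\setminus T$ are red-adjacent to two vertices of $T$ and so almost the entire graph is blue-joined to at least two vertices of $T$. A direct SDR argument on $T$ itself fails there, and producing the auxiliary triangle $T'$ of the required kind requires delicate counting of red triangles weighted against blue-neighborhood structure. This step is what forces the quantitative threshold $n\geq 111$: the pigeonhole and matching inequalities relating $d_B(v)$, red-clique sizes in blue neighborhoods, and red-triangle counts all close simultaneously only once $n$ is sufficiently large.
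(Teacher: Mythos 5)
Your lower bound is correct and matches the paper's (which simply invokes the general Burr bound). The upper bound, however, is a plan rather than a proof, and the part you leave open is exactly where the difficulty lies --- indeed you have misdiagnosed which Hall case is hard. Case (iii) is the easy one: if $|W_{12}\cup W_{13}\cup W_{23}|\le 2$, then at least $4n-3$ vertices outside $T$ have at least two blue neighbours in $T$, so some $v_i$ has $d_B(v_i)\ge (8n-6)/3\ge 2n+4$, and your own Gallai step then yields a blue-independent set in $N_B(v_i)$ of size at least $d_B(v_i)-2(n-1)\ge 6$, i.e.\ a red $K_6\supseteq H_a$. The genuinely hard situation is cases (i)/(ii) with all blue degrees at most $2n+1$, say $W_{12}=\emptyset$: this is precisely the near-extremal configuration modelled on the red graph $K_{2n,2n,1}$, in which \emph{every} red triangle has one of its three sets $W_{ij}$ empty. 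Your proposed escape --- locate an auxiliary red triangle $T'$ whose triple admits an SDR --- therefore cannot succeed in general: near the extremal colouring no such $T'$ exists, and the contradiction must instead be a blue $F_n$. Your sketch contains no mechanism for producing that blue fan, which is the bulk of the actual argument.

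For comparison, the paper splits on $\Delta(\overline G)\ge 2n+2$ versus $\delta(G)\ge 2n$. The first case is essentially your Gallai observation, sharpened: the vertices of $N_{\overline G}(u)$ uncovered by a maximum blue matching form a red clique of size $d_{\overline G}(u)-2(n-1)\ge 4$, which is then extended to $H_a$ using endpoints of matching edges (a step your outline omits for blue degrees $2n+2$ and $2n+3$, where the clique has only $4$ or $5$ vertices). In the second case the paper invokes $R(W_4,F_n)=4n+1$ (the companion lemma, and the source of the threshold $n\ge 111$) to obtain a red wheel $W_4$, a much richer seed than a triangle; the two pairs of opposite rim vertices then have disjoint common-neighbourhood sets $U_1,U_2$ of size at least $2n-4$ each, the components of $G[U_i]$ are forced to be stars, the at most nine leftover vertices are classified, and a three-stage matching construction ($M_1,M_2,M_3$) assembles a blue $F_n$ centred at a rim vertex. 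Without an analogue of this structural and matching analysis --- or of the wheel lemma --- your outline does not close.
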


We present the proofs of the two theorems in Section~\ref{section2} and Section~\ref{section3}, respectively. The proof of Theorem~\ref{thm:fan} is more involved, and the lower bound $n\ge 111$ is not the best possible; rather, it arises from the limitations of our method. Therefore, identifying a smaller positive integer $n$ such that $F_n$ is $H_a$-good is a worthwhile direction for further research.

At the end of this section, we introduce some concepts and notation that will be used throughout. We use $[n]$ to denote the set $\{1,2,\ldots,n\}$. For a graph $G$ and a vertex subset $X\subseteq V(G)$, the subgraph induced by $X$ is denoted by $G[X]$, whose vertex set is $X$ and whose edge set consists of all edges in $G$ with both endpoints in $X$. For $u\in V(G)$ and $X\subseteq V(G)$, let $N_X(u)$ denote the set of neighbors of $u$ in $X$, and let $|N_X(u)|$ denote the number of such neighbors, also written as $d_X(u)$. When $X=V(G)$, we write $N_X(u)$ and $d_X(u)$ simply as $N(u)$ and $d(u)$, respectively. The minimum and maximum degrees of $G$ are denoted by $\delta(G)$ and $\Delta(G)$, respectively. Notation and terminology not explicitly defined follow \cite{Bondy2008}.

\section{Proof of Theorem~\ref{thm:star}}\label{section2}

When $n$ is even, the lower bound can be derived from inequality~(\ref{equal:basic}):
\[R(H_a,K_{1,n})\ge (\chi(H_a)-1)(|K_{1,n}|-1)+s(H_a)=2n+2.\]

When $n$ is odd, let $\ell=(n+1)/2$. Consider the graph $(\ell K_2)+(\ell K_2)$, which consists of two disjoint copies of a matching with $\ell$ edges, where the edges between the two copies form a complete bipartite graph. It is straightforward to verify that this graph does not contain the Haj\'os graph as a subgraph, and that the maximum degree in its complement is $n-1$. Therefore, $R(H_a,K_{1,n})\ge 2n+3$ for odd $n$.

For the upper bound, we first consider the case $n=2$, namely, proving that $R(H_a,K_{1,2})\le 6$. For any graph $H$ of order $6$, if its complement $\overline{H}$ does not contain $K_{1,2}$ as a subgraph, then the edge set of $\overline{H}$ forms a matching. Consequently, $H$ must contain $K_{2,2,2}$ as a subgraph, which in turn contains $H_a$ as a subgraph.

Now suppose $n\ge 3$. Let $G$ be an arbitrary graph with $2n+2+\mathbf{1}_{\text{odd}}(n)$ vertices, where $\mathbf{1}_{\text{odd}}(n)$ is an indicator function: $\mathbf{1}_{\text{odd}}(n)=0$ if $n$ is even, and $\mathbf{1}_{\text{odd}}(n)=1$ if $n$ is odd. Suppose $\overline{G}$ contains no copy of $K_{1,n}$. We now prove that $G$ must contain the Haj\'os graph as a subgraph.

Since $\Delta(\overline{G})\le n-1$, it follows that $\delta(G)\ge n+2+\mathbf{1}_{\text{odd}}(n)$. We proceed by considering three cases.

\setcounter{case}{0}
\begin{case}
	The graph $G$ does not contain $K_4$ as a subgraph.
\end{case}

By the classical result of \cite{Chvatal1977}, we have $R(K_3,K_{1,n})=2n+1$. Therefore, the graph $G$ must contain a triangle; without loss of generality, let its vertices be $u_1,u_2,u_3$. For $1\le i<j\le 3$, the minimum degree condition implies
\[
d(u_i)+d(u_j)\ge 2\delta(G)\ge 2n+4+2\times \mathbf{1}_{\text{odd}}(n)\ge |G|+2.
\]
Thus, vertices $u_1$ and $u_2$ share a common neighbor distinct from $u_3$, denoted $u_4$; similarly, $u_1$ and $u_3$ share a common neighbor $u_5\ne u_2$, and $u_2$ and $u_3$ share a common neighbor $u_6\ne u_1$. Since $G$ does not contain $K_4$ as a subgraph, the vertices $u_4,u_5,u_6$ must be pairwise distinct. Consequently, these six vertices induce a Haj\'os graph.

\begin{case} 
	The graph $G$ contains $K_5-e$ as a subgraph.
\end{case}

Let us denote the five vertices of one such $K_5-e$ by $v_1,v_2,v_3,v_4,v_5$, and let the set of remaining vertices be $W$. Then we have $|W|=2n-3+\mathbf{1}_{\text{odd}}(n)$. Note that $K_5-e$ contains a clique of four vertices; without loss of generality, assume that $v_1,v_2,v_3,v_4$ induce a $K_4$. By the minimum degree condition, for each $i\in[4]$, the vertex $v_i$ has at least $n-2+\mathbf{1}_{\text{odd}}(n)$ neighbors in $W$. Since $n\ge 3$, it follows that
\[4(n-2+\mathbf{1}_{\text{odd}}(n))>2n-3+\mathbf{1}_{\text{odd}}(n)=|W|.\]
Therefore, among $v_1,v_2,v_3,v_4$, there must exist two vertices, say $v_1$ and $v_2$, which share a common neighbor in $W$, denoted by $v_6$. Because the vertex $v_5$ has at least two neighbors in $\{v_2,v_3,v_4\}$, we may assume without loss of generality that $v_5$ is adjacent to both $v_2$ and $v_3$. Thus, we obtain a Haj\'os graph, where the vertex subsets $\{v_1,v_2,v_3\}$, $\{v_1,v_2,v_6\}$, $\{v_2,v_3,v_5\}$, and $\{v_1,v_3,v_4\}$ each induce a triangle.

\begin{case}
	The graph $G$ does not contain $K_5-e$ as a subgraph, but contains $K_4$ as a subgraph.
\end{case}

Let $w_1,w_2,w_3,w_4$ be the vertices of a $K_4$ in $G$.

We first consider the case when $n$ is even. Since $\delta(G)\ge n+2$, each vertex $w_i$ for $i\in [4]$ has at least $n-1$ neighbors in $V(G)\setminus \{w_1,w_2,w_3,w_4\}$. As $|V(G)\setminus \{w_1,w_2,w_3,w_4\}|=2n-2$, there must exist two vertices, say $w_1$ and $w_2$, that share a common neighbor in $V(G)\setminus \{w_1,w_2,w_3,w_4\}$, denoted by $w_5$. Since $G$ does not contain $K_5-e$ as a subgraph, neither $w_3$ nor $w_4$ is adjacent to $w_5$.

Let $X=V(G)\setminus \{w_1,w_2,w_3,w_4,w_5\}$. Then $|X|=2n-3$. By the minimum degree condition,
\[d_X(w_1)\ge n-2, d_X(w_2)\ge n-2, d_X(w_3)\ge n-1, \text{and } d_X(w_4)\ge n-1.\]
If $w_1$ and $w_3$ have a common neighbor in $X$, denoted by $w_6$, then we can construct a Haj\'os graph in which the vertex subsets $\{w_1,w_2,w_3\}$, $\{w_1,w_2,w_5\}$, $\{w_2,w_3,w_4\}$, and $\{w_1,w_3,w_6\}$ each induce a triangle. Hence, $N_X(w_1)\cap N_X(w_3)=\emptyset$. By symmetry, we have $N_X(w_2)\cap N_X(w_3)=\emptyset$.

Since $|X|=2n-3$, $d_X(w_1)\ge n-2$ and $d_X(w_3)\ge n-1$, the sets $N_X(w_1)$ and $N_X(w_3)$ form a partition of $X$, and we have $|N_X(w_1)|=n-2$ and $|N_X(w_3)|=n-1$. Given that $d_X(w_2)\ge n-2$ and $N_X(w_2)\cap N_X(w_3)=\emptyset$, it follows that $N_X(w_1)=N_X(w_2)$. Therefore,
\[
N(w_1)\cap N(w_2)=N_X(w_1)\cup \{w_3,w_4,w_5\} \text{ and } |N(w_1)\cap N(w_2)|=n+1.
\]
Since $n$ is even, by the classical result of \cite{Harary1972},
\[
R(K_{1,2},K_{1,n})=n+1=|N(w_1)\cap N(w_2)|.
\]
By assumption, $\overline{G}$ does not contain $K_{1,n}$ as a subgraph. Hence, the graph $K_{1,2}$ appears within the common neighborhood of $w_1$ and $w_2$, and together with $w_1$ and $w_2$ induces a subgraph isomorphic to $K_5-e$. This contradicts the assumption that $G$ does not contain $K_5-e$ as a subgraph.

Now consider the case where $n$ is odd. Let $Y=V(G)\setminus \{w_1,w_2,w_3,w_4\}$. Since $\delta(G)\ge n+3$, each vertex $w_i$ for $i\in [4]$ has at least $n$ neighbors in $Y$. Given that $|Y|=2n-1$, any two vertices among $w_1,w_2,w_3,w_4$ share at least one common neighbor in $Y$. Without loss of generality, assume that $w_1$ and $w_2$ share a common neighbor $w_5$ in $Y$, and $w_2$ and $w_3$ share a common neighbor $w_6$ in $Y$. As $G$ does not contain $K_5-e$ as a subgraph, vertices $w_5$ and $w_6$ must be distinct. Consequently, we obtain a Haj\'os graph in which the vertex subsets $\{w_1,w_2,w_3\}$, $\{w_1,w_2,w_5\}$, $\{w_2,w_3,w_6\}$, and $\{w_1,w_3,w_4\}$ each induce a triangle. \qed

\section{Proof of Theorem~\ref{thm:fan}}\label{section3}
The proof of Theorem~\ref{thm:fan} relies on the following key lemma, which was established by \cite{Zeng2025}.
\begin{lemma}\label{lem:W4}
	$R(W_4,F_n)=4n+1$ for $n\ge 111$.
\end{lemma}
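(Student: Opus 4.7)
The plan is to prove the two bounds separately.

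\textbf{Lower bound.} Inequality~\eqref{equal:basic} with $\chi(W_4)=3$, $s(W_4)=1$, and $|V(F_n)|=2n+1$ gives $R(W_4,F_n)\ge 4n+1$. Explicitly, on $K_{4n}$ take the red graph to be $K_{2n,2n}$ (bipartite, hence $W_4$-free) and the blue graph to be $2K_{2n}$ (each component has $2n<|V(F_n)|$ vertices, so no $F_n$).

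\textbf{Upper bound.} Suppose for contradiction that $G$ is a graph on $N=4n+1$ vertices with no $W_4$ and $\overline{G}$ has no $F_n$. First I bound $\Delta(\overline{G})$: for each $v$, the absence of $F_n$ forces the matching number of $\overline{G}[N_{\overline{G}}(v)]$ to be at most $n-1$, hence its vertex cover has size $\le 2(n-1)$, and the uncovered vertices of $N_{\overline{G}}(v)$ form a clique in $G$ of size at most $4$ (since $K_5\supseteq W_4$). Thus $|N_{\overline{G}}(v)|\le 2n+2$, i.e., $\delta(G)\ge 2n-2$. Next, if $G$ were triangle-free, the ratio $(2n-2)/(4n+1)>2/5$ would place us in the Andr\'asfai-Erd\H{o}s-S\'os regime, forcing $G$ to be bipartite; the larger part, of size $\ge 2n+1$, would be independent in $G$, so $\overline{G}$ would contain $K_{2n+1}\supseteq F_n$. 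So $G$ contains a triangle.

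Let $\{u_1,u_2,u_3\}$ span a triangle and $W=V(G)\setminus\{u_1,u_2,u_3\}$. A double count based on $\sum_i d_G(u_i)\ge 3(2n-2)$ and $|W|=4n-2$ either extends the triangle to a $K_4$ directly (if some $w\in W$ is adjacent to all three $u_i$'s) or produces a pair, say $\{u_1,u_2\}$, with a large set $X\subseteq W\setminus N_G(u_3)$ of common $G$-neighbors. In the latter case, any $G$-edge inside $X$ also yields a $K_4$; if no such edge exists, the $C_4$-free neighborhood $G[N_G(u_1)]$ (which contains $u_2$ and all of $X$) forces $G[X]$ to be a matching, imposing strong structural restrictions. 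Once a $K_4$ $\{u_1,u_2,u_3,u_4\}\subseteq G$ is located, the forbidden supergraphs $K_5$ and $K_5-e$---both of which contain $W_4$---force every external vertex to have at most $2$ $G$-neighbors among $\{u_1,u_2,u_3,u_4\}$. A second double count then yields a pair $\{u_i,u_j\}$ from the $K_4$ with $\Omega(n)$ common external $G$-neighbors, all lying together with $u_j$ in the $C_4$-free subgraph $G[N_G(u_i)]$.

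\textbf{Main obstacle.} The decisive step is to convert this abundance of common neighbors into a contradiction with the $C_4$-freeness of $G[N_G(u_i)]$: the K\H{o}v\'ari-S\'os-Tur\'an-type bound $e(H)=O(|V(H)|^{3/2})$ for $C_4$-free $H$, combined with $|V(G[N_G(u_i)])|=d_G(u_i)\ge 2n-2$, should close the remaining cases. The hypothesis $n\ge 111$ is the quantitative slack required for the final inequalities. A stability-based argument showing $G$ is nearly bipartite (close to $K_{2n,2n+1}$) followed by a cleanup step might lower this threshold, but the direct degree-and-matching route outlined above is the most concrete.
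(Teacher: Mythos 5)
First, note that the paper does not prove this lemma at all: it is quoted from the separate submitted manuscript~\cite{Zeng2025}, so there is no in-paper argument to compare against, and the fact that the statement is the subject of its own paper already suggests that a complete proof requires substantially more than a one-page sketch. Your lower bound is correct and complete: $\chi(W_4)=3$, $s(W_4)=1$, and the witness $K_{2n,2n}$ (red) versus $2K_{2n}$ (blue) on $4n$ vertices works. The opening of your upper bound is also sound: the matching/vertex-cover argument giving $\delta(G)\ge 2n-2$ is fine (a clique of size $5$ in $G$ would contain $W_4$, so the uncovered part of $N_{\overline{G}}(v)$ has at most $4$ vertices), and the Andr\'asfai--Erd\H{o}s--S\'os step correctly forces a triangle in $G$.

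After that, however, there are two genuine gaps. \emph{(i) Locating a $K_4$.} When the set $X$ of common neighbours of $u_1,u_2$ is independent in $G$, you have found neither a $K_4$ nor a contradiction: $|X|$ is only about $2n/3$, so the clique it induces in $\overline{G}$ is far too small to contain $F_n$ (which needs $2n+1$ vertices), and the remark that $G[X]$ is ``a matching'' inside the $C_4$-free graph $G[N_G(u_1)]$ leads nowhere by itself. \emph{(ii) The final contradiction.} A pair $u_i,u_j$ of the $K_4$ with $\Omega(n)$ common external neighbours does not conflict with the $C_4$-freeness of $G[N_G(u_i)]$: those common neighbours together with $u_j$ form a star inside $G[N_G(u_i)]$, and a star is $C_4$-free with far fewer than $m^{3/2}$ edges, so the K\H{o}v\'ari--S\'os--Tur\'an bound is never violated. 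Even an edge $x_px_q$ between two common neighbours only produces $K_5$ minus two adjacent edges, which does not contain $W_4$. The real work --- which your ``Main obstacle'' paragraph defers --- is to show that this near-extremal structure forces an $F_n$ in $\overline{G}$ (e.g.\ by assembling a matching of $n$ $\overline{G}$-edges inside the non-neighbourhood of some rim vertex, as in the paper's own Case~2 analysis for $H_a$), and that step is precisely where the delicate case analysis and the threshold $n\ge 111$ come from; your inequalities would already hold for $n$ around $10$, which is a sign that the actual difficulty has not been engaged. As written, the proposal is an incomplete plan rather than a proof.
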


Now we begin the proof of Theorem~\ref{thm:fan}.

\begin{proof}[of Theorem~\ref{thm:fan}]
	The lower bound follows directly from inequality~(\ref{equal:basic}):
	\[
	R(H_a,F_n)\ge (\chi(H_a)-1)(|F_n|-1)+s(H_a)=(3-1)(2n+1-1)+2=4n+2.
	\]  
	To prove the upper bound, we proceed by contradiction. Assume that a graph $G$ with $4n+2$ vertices contains neither $H_a$ as a subgraph nor $F_n$ as a subgraph in $\overline{G}$. We will derive contradictions in the following two cases.
	
	\setcounter{case}{0}
	\begin{case}
		$\Delta(\overline{G})\ge 2n+2$
	\end{case}
	
	Let $u$ be a vertex in $\overline{G}$ with the maximum degree. Denote by $H^{'}$ the subgraph of $\overline{G}$ induced by the set of vertices adjacent to $u$ in $\overline{G}$, i.e., $H^{'}=\overline{G}[N_{\overline{G}}(u)]$. Let $M$ be a maximum matching in $H^{'}$. Then $M$ must contain exactly $n-1$ edges. This is because if $M$ has at least $n$ edges, these $n$ edges together with $u$ would form an $F_n$ in $\overline{G}$, a contradiction. Conversely, if $M$ has at most $n-2$ edges, the number of vertices in $H^{'}$ not covered by $M$ (i.e., the remaining vertices after removing the vertices of $M$) is at least $2n+2-2(n-2)=6$. These vertices form a complete graph in $G$, which clearly contains $H_a$ as a subgraph, leading to a contradiction.
	
	Since $\Delta(\overline{G})\ge 2n+2$, it follows that $|V(H^{'})\setminus V(M)|\ge4$, and the vertices in $V(H^{'})\setminus V(M)$ induce a complete subgraph in $G$.
	
	- If $|V(H^{'})\setminus V(M)|\ge 6$, it is evident that $G$ contains $H_a$ as a subgraph.
	
	- If $|V(H^{'})\setminus V(M)|=5$, consider any edge $y_1y_2\in M$. At least one of $y_1$ or $y_2$ is adjacent to at least four vertices in $V(H^{'})\setminus V(M)$ in $G$. Otherwise, $H^{'}$ would contain a matching larger than $M$, contradicting the maximality of $M$. Without loss of generality, assume $y_1$ is adjacent to at least four vertices in $V(H^{'})\setminus V(M)$. Then the subgraph of $G$ induced by $(V(H^{'})\setminus V(M))\cup\{y_1\}$ must contain $H_a$ as a subgraph, leading to a contradiction.
	
	- If $|V(H^{'})\setminus V(M)|=4$, consider any two edges $y_1y_2$ and $y_3y_4\in M$. For each edge, at least one endpoint must be adjacent to at least three vertices in $V(H^{'})\setminus V(M)$ in $G$. Otherwise, $H^{'}$ would contain a matching larger than $M$, again contradicting the maximality of $M$. Without loss of generality, assume $y_1$ and $y_3$ are each adjacent to at least three vertices in $V(H^{'})\setminus V(M)$. Then the subgraph of $G$ induced by $(V(H^{'})\setminus V(M))\cup\{y_1,y_3\}$ must contain $H_a$ as a subgraph, leading to a contradiction.

	\begin{case}
		$\delta(G)\ge 2n$
	\end{case}
	
	According to Lemma~\ref{lem:W4}, the graph $G$ contains $W_4$ as a subgraph. Let the center of this $W_4$ be $u_0$, with the corresponding cycle $u_1u_2u_3u_4u_1$. We have the following claim.
	
	\begin{claim}
		The edges $u_1u_3, u_2u_4\not\in E(G)$.
	\end{claim}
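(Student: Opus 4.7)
The plan is to prove $u_1u_3 \notin E(G)$ by contradiction; the companion claim $u_2u_4 \notin E(G)$ then follows by the obvious symmetric argument, after swapping the roles of the pairs $(u_1,u_3)$ and $(u_2,u_4)$. So I assume $u_1u_3 \in E(G)$ and aim to exhibit a copy of $H_a$ in $G$.

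First I record the structural consequences of the wheel together with the assumed diagonal: the set $\{u_0, u_1, u_3\}$ spans a triangle in $G$, and both $u_2$ and $u_4$ are adjacent (via wheel and cycle edges) to \emph{all three} of $u_0, u_1, u_3$. So I already have the central triangle of a potential $H_a$ with two candidate ears in hand; I only need a sixth vertex $v$, disjoint from $\{u_0, \ldots, u_4\}$, that is adjacent to at least two vertices of this triangle.

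The next step is a short double-count. Set $X = V(G) \setminus \{u_0, u_1, u_2, u_3, u_4\}$, so $|X| = 4n - 3$. Each $u_i$ with $i \in \{0, 1, 3\}$ has exactly four neighbors among $\{u_0, \ldots, u_4\} \setminus \{u_i\}$, so the Case 2 hypothesis $\delta(G) \ge 2n$ forces $d_X(u_i) \ge 2n - 4$. Summing, $d_X(u_0) + d_X(u_1) + d_X(u_3) \ge 6n - 12 > 4n - 3 = |X|$ (the inequality is comfortable for $n \ge 111$), so by pigeonhole some $v \in X$ has at least two neighbors among $\{u_0, u_1, u_3\}$.

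Finally, to build $H_a$ I place $v$ as the ear on the edge of the triangle $u_0u_1u_3$ joining its two neighbors, and I assign $u_2$ and $u_4$ as the ears on the other two edges (in either order). Since $u_2$ and $u_4$ are universal to $\{u_0, u_1, u_3\}$, they may be placed on any edge, and the six vertices $\{u_0, u_1, u_2, u_3, u_4, v\}$ together with the nine chosen edges form a subgraph isomorphic to $H_a$, contradicting the assumption that $G$ is $H_a$-free.

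The main obstacle is the counting step: one has to verify that $\delta(G) \ge 2n$ is strong enough to force a common neighbor of two triangle vertices in $X$. After that, the rigid structure of $H_a$ cooperates nicely, because the two ``universal'' vertices $u_2, u_4$ are flexible enough to cover whichever two edges of the triangle remain.
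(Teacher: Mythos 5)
Your proposal is correct and follows essentially the same route as the paper: both assume $u_1u_3\in E(G)$, use the minimum-degree condition $\delta(G)\ge 2n$ together with a pigeonhole count over $V(G)\setminus V(W_4)$ to find a sixth vertex adjacent to two vertices of a triangle, and then complete a copy of $H_a$ using the remaining wheel vertices as ears. The only (cosmetic) difference is that you center the construction on the triangle $u_0u_1u_3$ with $u_2,u_4$ as universal ears, whereas the paper pigeonholes over $\{u_1,u_2,u_3\}$; both counts and both $H_a$ assemblies are valid.
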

	\begin{proof}
		We only need to prove that $u_1u_3\not\in E(G)$; the other case can be proven similarly. Suppose $u_1u_3\in E(G)$. Since $\delta(G)\ge 2n$, the set $\{u_1, u_2, u_3\}$ must contain two vertices, say $u_1$ and $u_3$, that have a common neighbor in $V(G)\setminus V(W_4)$. Otherwise, this would contradict the fact that $G$ has $4n+2$ vertices. Let $u_5$ be a common neighbor of $u_1$ and $u_3$ in $V(G)\setminus V(W_4)$. Then the subgraph induced by the vertex set $\{u_0, u_1, u_2, u_3, u_4, u_5\}$ in $G$ contains $H_a$ as a subgraph, a contradiction.
	\end{proof}
	
	If $u_1$ and $u_2$ have a common neighbor distinct from $u_0$, denoted by $u_5$, then the subgraph induced by the vertex set $\{u_0, u_1, u_2, u_3, u_4, u_5\}$ contains $H_a$ as a subgraph. Therefore, the only common neighbor of $u_1$ and $u_2$ can be $u_0$. Furthermore, since $\delta(G)\ge 2n$, $u_2$ must have at least $2n-1$ neighbors in $V(G)\setminus N(u_1)$. Similarly, $u_4$ must also have at least $2n-1$ neighbors in $V(G)\setminus N(u_1)$. Hence, the number of common neighbors of $u_2$ and $u_4$ in $V(G)\setminus N(u_1)$ is at least
	\[
	2(2n-1)-|V(G)\setminus N(u_1)|\ge 2(2n-1)-(4n+2-2n)=2n-4.
	\]
	We denote the set of common neighbors of vertices $u_2$ and $u_4$, excluding $u_0$, as $U_1$. Then, $|U_1|\ge 2n-4$. Similarly, let $U_2$ represent the set of common neighbors of vertices $u_1$ and $u_3$, excluding $u_0$. By symmetry, we have $|U_2|\ge 2n-4$. Since $u_1$ and $u_2$ have only one common neighbor, $u_0$, it follows that $U_1\cap U_2=\emptyset$. It is easy to verify that $u_1, u_3\in U_1$ and $u_2, u_4\in U_2$. To avoid the appearance of the graph $H_a$, for $i=1,3$ and $j=2,4$, the only common neighbor of $u_i$ and $u_j$ must be $u_0$. Therefore, the vertices $u_1$ and $u_3$ are isolated in the graph $G[U_1]$, and the vertices $u_2$ and $u_4$ are isolated in the graph $G[U_2]$. Furthermore, we have the following claim.
	
	\begin{claim}\label{star}
		Every connected component of the graph $G[U_1\setminus \{u_1,u_3\}]$ is either an isolated vertex or a star.
	\end{claim}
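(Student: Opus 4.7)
The plan is to prove the claim by contradiction: I would show that $G[U_1\setminus\{u_1,u_3\}]$ contains neither a triangle nor a path $P_4$ on four vertices. Any connected graph omitting both $K_3$ and $P_4$ as subgraphs is either a single vertex or a star, since the presence of two independent edges in a connected graph already forces a $P_4$ via a shortest connecting walk, and the only triangle-free alternative is a star. The two forbidden configurations are then ruled out in two separate arguments.

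For the $P_4$ exclusion, I would suppose there is a $P_4$ with vertex sequence $a,b,c,d$ in $G[U_1\setminus\{u_1,u_3\}]$. Since $a,b,c,d\in U_1$, each of these vertices is adjacent in $G$ to both $u_2$ and $u_4$. I would then locate $H_a$ on the six-vertex set $\{a,b,c,d,u_2,u_4\}$ by taking $\{b,c,u_2\}$ as the central triangle, with external vertices $a$ covering the edge $bu_2$, $d$ covering $cu_2$, and $u_4$ covering $bc$; crucially $u_4$ is not adjacent to $u_2$ by the previous claim, so $u_4$ cannot clash with $u_2$'s role as a triangle vertex. Pairwise distinctness of the six vertices follows from $a\ne d$ along the path together with $U_1\cap U_2=\emptyset$, producing $H_a$ and the desired contradiction.

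For the triangle exclusion, suppose $\{a,b,c\}$ spans a triangle in $G[U_1\setminus\{u_1,u_3\}]$. The vertex set $\{a,b,c,u_2,u_4\}$ induces $K_5$ minus only the non-edge $u_2u_4$, so the triangle $abc$ already has $u_2$ and $u_4$ as two all-adjacent external candidates. To complete an $H_a$, I would seek a sixth vertex $w\in V(G)\setminus\{a,b,c,u_2,u_4\}$ with at least two neighbors in $\{a,b,c\}$: assigning $w$ to cover the triangle edge spanned by its two triangle-neighbors and letting $u_2,u_4$ cover the remaining two edges yields $H_a$. To guarantee such $w$ exists, I would count edges leaving $\{a,b,c\}$: by $\delta(G)\ge 2n$ the degree sum is at least $6n$, of which $6$ is used by the triangle itself and $6$ more by the edges to $\{u_2,u_4\}$, leaving at least $6n-12$ edges directed from $\{a,b,c\}$ to the remaining $4n-3$ vertices. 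If each such vertex collected at most one of these edges the total would be at most $4n-3$, but $6n-12>4n-3$ holds well below the threshold $n\ge 111$.

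The main obstacle is the triangle case. The $P_4$ case is a clean substitution argument once one notices that the non-edge $u_2u_4$ conveniently frees $u_4$ to serve as a third external vertex; by contrast, the triangle case offers no such structural gift and forces the density-style counting above, which is where the Case~2 minimum-degree bound $\delta(G)\ge 2n$ genuinely enters. The counting margin there is comfortable, suggesting that if the overall $n\ge 111$ threshold for Theorem~\ref{thm:fan} is to be sharpened, the bottleneck is unlikely to reside in this claim.
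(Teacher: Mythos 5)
Your proposal is correct and follows essentially the same route as the paper: exclude $P_4$ by embedding $H_a$ into $\{a,b,c,d,u_2,u_4\}$, then exclude triangles by a degree/pigeonhole count (using $\delta(G)\ge 2n$ against $|V(G)|=4n+2$) to produce a sixth vertex with two neighbors on the triangle, completing $H_a$ with $u_2$ and $u_4$. Your write-up is in fact slightly more explicit than the paper's, which compresses the counting step into ``otherwise this would contradict the fact that the number of vertices in $G$ is $4n+2$.''
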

	\begin{proof}
		First, we prove that the graph $G[U_1\setminus \{u_1,u_3\}]$ does not contain $P_4$ as a subgraph. We proceed by contradiction. Suppose $G[U_1\setminus \{u_1,u_3\}]$ contains a $P_4$ as a subgraph, with vertices $x_1, x_2, x_3, x_4$. The subgraph induced by the vertex set $\{u_2, u_4, x_1, x_2, x_3, x_4\}$ in $G$ contains $H_a$ as a subgraph, which leads to a contradiction. Thus, each connected component of the graph $G[U_1\setminus \{u_1,u_3\}]$ is either an isolated vertex, a star, or a triangle. Note that when a connected component is $K_2$, it is also considered a star.
		
		Next, we only need to prove that the graph $G[U_1\setminus \{u_1,u_3\}]$ does not contain a triangle as a subgraph. Again, we proceed by contradiction. Suppose $G[U_1\setminus \{u_1,u_3\}]$ contains a triangle with vertices $x_1, x_2, x_3$. There must exist two vertices among $\{x_1, x_2, x_3\}$, say $x_1$ and $x_3$, that have a common neighbor in $V(G)\setminus \{x_1, x_2, x_3, u_2, u_4\}$, otherwise this would contradict the fact that the number of vertices in $G$ is $4n+2$. Let $x_4$ be a common neighbor of $x_1$ and $x_3$ in $V(G)\setminus \{x_1, x_2, x_3, u_2, u_4\}$. Then the subgraph induced by the vertex set $\{x_1, x_2, x_3, x_4, u_2, u_4\}$ in $G$ contains $H_a$ as a subgraph, which leads to a contradiction.
	\end{proof}
	
	By symmetry, it can also be shown that each connected component of the graph $G[U_2\setminus \{u_2,u_4\}]$ is either an isolated vertex or a star. Hence, the following claim holds.
	
	\begin{claim}\label{Uupper}
		$|U_1|\le 2n$ and $|U_2|\le 2n$.
	\end{claim}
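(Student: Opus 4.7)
The plan is to argue by contradiction; the bound $|U_2|\le 2n$ will then follow by the symmetric argument that swaps the pair $(u_1,u_3)$ with $(u_2,u_4)$. Suppose for contradiction that $|U_1|\ge 2n+1$. I will exhibit a copy of $F_n$ in $\overline G$ centered at $u_1$, contradicting the standing hypothesis that $\overline G$ is $F_n$-free. The key observation is that $u_1$ is isolated in $G[U_1]$, so $U_1\setminus\{u_1\}\subseteq N_{\overline G}(u_1)$; consequently it suffices to produce a matching of size $n$ inside $\overline G[U_1\setminus\{u_1\}]$.

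To organize this matching, I would exploit the structural description of $G[U_1\setminus\{u_1,u_3\}]$ provided by Claim~\ref{star}: this graph is a disjoint union of stars $S_1,\ldots,S_s$ (with center $c_i$ and $k_i\ge 1$ leaves) together with $t\ge 0$ isolated vertices. Since $u_3$ is also isolated in $G[U_1]$, it is $\overline G$-adjacent to every other vertex of $U_1\setminus\{u_1\}$. Consequently the non-edges of $\overline G[U_1\setminus\{u_1\}]$ are exactly the star edges $\{c_i,\ell\}$ inside the various $S_i$.

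The matching will be built in two stages. First, within each star $S_i$ the $k_i$ leaves form a clique in $\overline G$, which I pair up to obtain $\lfloor k_i/2\rfloor$ matching edges. Writing $s_o$ for the number of stars with $k_i$ odd, the vertices not yet matched form a set $R$ of size $|R|=s+s_o+t+1$, consisting of the $s$ centers, one leftover leaf $\ell_i^*$ from each odd star, the $t$ isolated vertices, and $u_3$. The only non-edges of $\overline G[R]$ are the $s_o$ pairs $\{c_i,\ell_i^*\}$, so $\overline G[R]$ is a copy of $K_{|R|}$ minus a matching of size $s_o$; in particular its minimum degree is at least $|R|-2$. By Dirac's theorem (when $|R|\ge 4$), or by direct inspection in the handful of small cases, $\overline G[R]$ contains a matching of size $\lfloor|R|/2\rfloor$.

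Combining the two stages, and using the fact that $K:=\sum k_i$ satisfies $K\equiv s_o\pmod 2$, the total matching inside $\overline G[U_1\setminus\{u_1\}]$ has size at least
\[
\frac{K-s_o}{2}+\left\lfloor\frac{s+s_o+t+1}{2}\right\rfloor=\left\lfloor\frac{K+s+t+1}{2}\right\rfloor=\left\lfloor\frac{|U_1|-1}{2}\right\rfloor\ge n.
\]
Adjoining $u_1$ to this matching produces the forbidden $F_n$ in $\overline G$, the desired contradiction. The main place where care is required is the matching bound for the clique-minus-matching subgraph $\overline G[R]$; everything else reduces to careful counting of stars, leaves, and parities.
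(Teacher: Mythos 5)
Your proposal is correct and follows the same overall strategy as the paper: assume $|U_1|\ge 2n+1$, use the fact that $u_1$ is isolated in $G[U_1]$ to reduce the problem to finding an $n$-edge matching in $\overline G[U_1\setminus\{u_1\}]$, and extract that matching from the star decomposition of Claim~\ref{star}. The only real difference is in how the matching is assembled. The paper first matches each star's center to a leaf of the \emph{next} star in a cyclic order (these are cross-component pairs, hence $\overline G$-edges), after which every remaining vertex lies in a clique of $\overline G$ and can be paired greedily; this avoids any external theorem. You instead pair the leaves within each star first and then observe that the residue $R$ (centers, one leftover leaf per odd star, isolated vertices, and $u_3$) induces $K_{|R|}$ minus a matching, to which you apply Dirac's theorem. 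That step is correct but heavier than necessary --- a clique minus a matching obviously contains a near-perfect matching by a two-line greedy argument, and the one potentially degenerate case ($|R|=2$ with a non-edge) cannot occur since $s_o\ge 1$ forces $s\ge 1$ and hence $|R|\ge 3$. Both routes yield the same count $\left\lfloor(|U_1|-1)/2\right\rfloor\ge n$, and your parity bookkeeping $\sum_i\lfloor k_i/2\rfloor=(K-s_o)/2$ is accurate, so the argument goes through.
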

	\begin{proof}
		We proceed by contradiction. Assume $|U_1|\ge 2n+1$ or $|U_2|\ge 2n+1$. By symmetry, we may assume $|U_1|\ge 2n+1$. It suffices to find a fan $F_n$ centered at $u_1$ in $\overline{G}$, leading to a contradiction.
		
		By the previous claim, each connected component of the graph $G[U_1\setminus \{u_1\}]$ is either an isolated vertex or a star. Let the nontrivial (non-isolated) connected components of this graph be $C_1,\ldots,C_k$. Note that in each connected component, at most one vertex has degree greater than $1$ in $G[U_1\setminus \{u_1\}]$.
		
		When $k\ge 2$, for each $1\le i\le k-1$, the center vertex of $C_i$ and a leaf vertex of $C_{i+1}$ form an edge in $\overline{G}$. Similarly, the center vertex of $C_k$ and a leaf vertex of $C_1$ form an edge in $\overline{G}$. These $k$ edges form a matching in $\overline{G}$. The remaining vertices in $\overline{G}[U_1\setminus \{u_1\}]$ induce a complete subgraph, allowing us to find a matching of $n$ edges in $\overline{G}[U_1\setminus\{u_1\}]$. This gives a fan $F_n$ centered at $u_1$ in $\overline{G}$, a contradiction.
		
		When $k=1$, the graph $G[U_1\setminus \{u_1\}]$ has only one nontrivial connected component, which must be a star. Let $v_0$ be the center of this star. Then $v_0u_3\in E(\overline{G})$. Since the remaining vertices in $\overline{G}[U_1\setminus \{u_1\}]$ induce a complete subgraph, $v_0u_3$ can be extended to a matching of $n$ edges in $\overline{G}[U_1\setminus \{u_1\}]$. This gives a fan $F_n$ centered at $u_1$ in $\overline{G}$, a contradiction.
		
		When $k=0$, the graph $G[U_1\setminus \{u_1\}]$ is an empty graph, so $\overline{G}[U_1\setminus \{u_1\}]$ contains a matching of $n$ edges. This also gives a fan $F_n$ centered at $u_1$ in $\overline{G}$, a contradiction.
	\end{proof}
	
	We denote the set of vertices outside $U_1\cup U_2\cup \{u_0\}$ by $W$. Then, 
	\[
	|W|\le 4n+2-2(2n-4)-1=9.
	\]
	Furthermore, we partition $W$ into four subsets: $W_1$, $W_2$, $W_3$, and $W_4$. If a vertex in $W$ is adjacent to $u_2$ or $u_4$, it belongs to $W_1$. If a vertex in $W$ is adjacent to $u_1$ or $u_3$, it belongs to $W_2$. If a vertex $w\in W$ is not adjacent to any of $u_1$, $u_2$, $u_3$, or $u_4$, and the number of its neighbors in $U_2$ is at least the number of its neighbors in $U_1$, i.e., $d_{U_2}(w)\ge d_{U_1}(w)$, then it belongs to $W_3$. Otherwise, if $d_{U_2}(w)<d_{U_1}(w)$, it belongs to $W_4$. To avoid the appearance of the graph $H_a$, we have $W_1\cap W_2=\emptyset$. Consequently, $W_1$, $W_2$, $W_3$, and $W_4$ form a partition of $W$.
	
	Note that the neighbors of $u_2$ must belong to $W_1\cup U_1\cup \{u_0\}$. Since $\delta(G)\ge 2n$, we have $|W_1|\ge 2n-1-|U_1|$. By symmetry, $|W_2|\ge 2n-1-|U_2|$. Hence, $|W_3|+|W_4|\le (4n+2)-1-2(2n-1)=3$.
	
	By the pigeonhole principle, either $|U_1|+|W_1|+|W_3|\ge 2n+1$, or $|U_2|+|W_2|+|W_4|\ge 2n+1$. Without loss of generality, assume the former holds. Next, we will show that if $G$ does not contain $H_a$ as a subgraph, then $\overline{G}[U_1\cup W_1\cup W_3]$ must contain a fan $F_n$ centered at $u_1$.
	
	Note that every vertex in $(U_1\cup W_1\cup W_3)\setminus \{u_1,u_3\}$ is not adjacent to $u_1$ or $u_3$. Therefore, it suffices to find a matching of $n$ edges in $\overline{G}[(U_1\cup W_1\cup W_3)\setminus \{u_1\}]$, which yields a fan $F_n$ centered at $u_1$ in $\overline{G}$.
	
	From $W_1\cup W_3$, select $2n+1-|U_1|$ vertices, denoted by $w_1,\ldots,w_t$, where $t=2n+1-|U_1|$. Since $|U_1|\ge 2n-4$, it follows that $t\le 5$. We first find a matching $M_1$ in $\overline{G}[\{w_1,\ldots,w_t\}\cup(U_1\setminus\{u_1,u_3\})]$ such that each edge of $M_1$ has at least one endpoint in $\{w_1,\ldots,w_t\}$ and $V(M_1)$ includes as many vertices as possible from $\{w_1,\ldots,w_t\}$. The following claim holds:
	
	\begin{claim}\label{Usize}
		At most one vertex in $\{w_1,\ldots,w_t\}$ does not belong to $V(M_1)$.
	\end{claim}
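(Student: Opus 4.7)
The plan is to argue by contradiction, assuming two distinct vertices $w_p, w_q \in \{w_1, \ldots, w_t\}$ both lie outside $V(M_1)$, and to derive a contradiction with either the extremal choice of $M_1$ or the $H_a$-freeness of $G$.

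First I would verify that $w_pw_q \in E(G)$: otherwise $M_1 \cup \{w_pw_q\}$ would still satisfy the constraint in the definition of $M_1$ (the new edge has both endpoints in $\{w_1,\ldots,w_t\}$) and would saturate two more vertices of $\{w_1,\ldots,w_t\}$, contradicting the choice of $M_1$. Next I would carry out an $M_1$-alternating-path analysis starting at $w_p$. Let $S_p \subseteq \{w_1, \ldots, w_t\}$ be the set of vertices reachable from $w_p$ via an $M_1$-alternating walk (beginning with a $\overline{G}$-edge into $B := U_1 \setminus \{u_1, u_3\}$), and let $T_p \subseteq B$ be the $B$-vertices visited. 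Extremality of $M_1$ forces that every $b \in T_p$ is $M_1$-matched to some vertex of $S_p \setminus \{w_p\}$ (else a direct augmenting path saturates $w_p$); that $w_q \notin S_p$ and no $w_k \in S_p$ has $w_kw_q \in E(\overline{G})$ (else an augmenting path between the unsaturated vertices $w_p$ and $w_q$ saturates both, a gain of two); and that $N_{\overline{G}}(w_p) \cap B \subseteq T_p$. Hence $|N_{\overline{G}}(w_p) \cap B| \le |T_p| = |S_p| - 1 \le t - 2 \le 3$, and symmetrically for $w_q$. The common $G$-neighborhood $Q := N_G(w_p) \cap N_G(w_q) \cap B$ therefore satisfies $|Q| \ge |B| - 6 \ge 2n - 12$, which is enormous for $n \ge 111$.

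The final step is to use $Q$, Claim~\ref{star} (components of $G[B]$ are isolated vertices or stars), and the fact that $u_2$ and $u_4$ are $G$-adjacent to every vertex of $U_1$ to exhibit an $H_a$-subgraph of $G$, contradicting the standing hypothesis. The cleanest configuration locates a star component in $G[B]$ whose center $c$ and two of whose leaves $\ell_1, \ell_2$ all lie in $Q$; then the triangle $\{w_p, c, \ell_1\} \subseteq G$ extends to an $H_a$ with outer vertices $\ell_2$ (covering edge $w_pc$), $w_q$ (covering edge $w_p \ell_1$), and $u_2$ (covering edge $c\ell_1$). Two secondary configurations handle the remaining typical cases: when $w_p, w_q \in W_1$ share an adjacency to some $u_i \in \{u_2, u_4\}$, the triangle $\{w_p, w_q, u_i\}$ with any three distinct vertices of $Q$ as outer vertices yields $H_a$; and when $w_p$ is adjacent to $u_2$ while $w_q$ is adjacent to $u_4$ (or vice versa), the triangle $\{w_p, w_q, b\}$ for any $b \in Q$ admits outer vertices $b' \in Q \setminus \{b\}$, $u_2$, and $u_4$ covering its three edges respectively.

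The main obstacle is the remaining subcase $w_p, w_q \in W_3$ in which no center-plus-two-leaves triple of $G[B]$ lies inside $Q$. Here the defining inequality $d_{U_2}(w) \ge d_{U_1}(w)$ for every $w \in W_3$, together with the bound $d_B(w) \ge |B| - 3$ just established, produces a comparably large common $G$-neighborhood $Q' \subseteq U_2 \setminus \{u_2, u_4\}$; since the argument for Claim~\ref{star} is symmetric in $U_1$ and $U_2$, the graph $G[U_2 \setminus \{u_2, u_4\}]$ is also a disjoint union of isolated vertices and stars, and the center-plus-two-leaves construction can be rerun inside $Q'$ with $u_1, u_3$ now playing the role of $u_2, u_4$. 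Verifying that at least one of $G[B]$ or $G[U_2 \setminus \{u_2, u_4\}]$ always supplies the needed triple, by comparing $|Q|, |Q'| \ge 2n - 12$ with the maximum independent set size imposed by the star decomposition, is the technical heart of the case analysis.
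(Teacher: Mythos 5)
Your skeleton is sound and, up to a point, parallels the paper's: assume two unmatched vertices $w_p,w_q$, deduce $w_pw_q\in E(G)$ and that each of $w_p,w_q$ is $G$-adjacent to almost all of $U_1\setminus\{u_1,u_3\}$, then split according to membership in $W_1$ or $W_3$. (Your alternating-path analysis is correct but heavier than needed: an unmatched $w_i$ must already be $G$-adjacent to every vertex of $U_1\setminus(\{u_1,u_3\}\cup V(M_1))$, since otherwise a single new $\overline{G}$-edge could be added to $M_1$.) Your two ``secondary configurations'' for $w_p,w_q\in W_1$ are exactly the paper's argument for that subcase.

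The genuine gap is in the subcase $w_p,w_q\in W_3$ (and in the mixed subcase $w_p\in W_1$, $w_q\in W_3$, which your proposal does not address at all). Your primary construction requires a star component of $G[U_1\setminus\{u_1,u_3\}]$ (or of $G[U_2\setminus\{u_2,u_4\}]$) whose center and two leaves all lie in $Q$ (resp.\ $Q'$). But Claim~\ref{star} only constrains the \emph{shape} of components; it gives no lower bound on the number of edges, and both induced subgraphs can be entirely edgeless: a vertex of $U_1$ can satisfy $\delta(G)\ge 2n$ using only neighbors in $U_2\cup W\cup\{u_0,u_2,u_4\}$, since $|U_2|$ may be as large as $2n$. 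Hence ``comparing $|Q|,|Q'|\ge 2n-12$ with the maximum independent set size imposed by the star decomposition'' cannot close the argument --- a disjoint union of isolated vertices imposes no such bound, and the technical heart you defer is exactly where the proof fails. The paper's escape is different: each of $w_p,w_q\in W_3$ has at least $2n-9$ neighbors in $U_2$, hence they share at least $2n-18$ common neighbors there; since each component of $G[U_2]$ contains at most one vertex of degree $\ge 2$ and $|U_2|\le 2n$ (Claim~\ref{Uupper}), some common neighbor $u_z$ has degree at most $1$ in $G[U_2]$, and the degree condition then forces $u_z$ to have at least $2n-11$ neighbors back in $U_1$. The triangle $\{u_z,w_p,w_q\}$ thus has three common neighbors in $U_1$, producing $K_{1,1,1,3}\supseteq H_a$ with no star component needed. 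That low-degree-vertex-in-$U_2$ step (and its variant with $u_2$ for the mixed case) is the missing idea.
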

	\begin{proof}
		Suppose at least three vertices in $\{w_1,\ldots,w_t\}$ are not in $V(M_1)$. Without loss of generality, assume $w_1,w_2,w_3\notin V(M_1)$. It follows that $w_1,w_2,w_3$ form a triangle in $G$. To see this, if any edge is missing from $G$, say $w_1w_2\in E(\overline{G})$, we could add the edge $w_1w_2$ to $M_1$, forming a new matching that includes more vertices from $\{w_1,\ldots,w_t\}$, contradicting the choice of $M_1$. Additionally, by the choice of $M_1$, $w_1$, $w_2$, and $w_3$ must each be adjacent to every vertex in $U_1\setminus(\{u_1,u_3\}\cup V(M_1))$. Since $t\le 5$, at most two vertices of $V(M_1)$ are in $U_1$. Thus, $|U_1\setminus(\{u_1,u_3\}\cup V(M_1))|\ge (2n-4)-(2+2)=2n-8$. Selecting any three vertices from $U_1\setminus(\{u_1,u_3\}\cup V(M_1))$, together with $w_1,w_2,w_3$, induces a complete multipartite graph $K_{1,1,1,3}$ in $G$, which contains $H_a$ as a subgraph, a contradiction.
		
		If exactly two vertices in $\{w_1,\ldots,w_t\}$ are not in $V(M_1)$, assume $w_1,w_2\notin V(M_1)$. Then $w_1w_2\in E(G)$ by the choice of $M_1$. Moreover, $w_1$ and $w_2$ must be adjacent to every vertex in $U_1\setminus(\{u_1,u_3\}\cup V(M_1))$. Since at most three vertices in $V(M_1)$ are in $U_1$, we have $|U_1\setminus(\{u_1,u_3\}\cup V(M_1))|\ge |U_1|-5\ge 2n-9$.
		
		First, consider $w_1,w_2\in W_1$. If $w_1$ and $w_2$ are both adjacent to either $u_2$ or $u_4$, without loss of generality, assume $w_1u_2,w_2u_2\in E(G)$. Selecting any three vertices from $U_1\setminus(\{u_1,u_3\}\cup V(M_1))$, together with $w_1$, $w_2$, and $u_2$, induces a complete multipartite graph $K_{1,1,1,3}$ in $G$, which contains $H_a$ as a subgraph, leading to a contradiction. If $w_1$ and $w_2$ are adjacent to different vertices in $\{u_2,u_4\}$, assume $w_1u_2,w_2u_4\in E(G)$. Selecting any two vertices $u_x,u_y$ from $U_1\setminus(\{u_1,u_3\}\cup V(M_1))$, the four sets $\{w_1,w_2,u_x\}$, $\{w_1,w_2,u_y\}$, $\{w_1,u_2,u_y\}$, and $\{w_2,u_4,u_y\}$ each induce a triangle. Thus, $G$ contains $H_a$ as a subgraph, a contradiction.
		
		Next, consider $w_1,w_2\in W_3$. By the definition of $W_3$, $w_1$ and $w_2$ each have at least $2n-9$ neighbors in $U_2$. Thus, $w_1$ and $w_2$ share at least $2(2n-9)-|U_2|\ge 2n-18$ common neighbors in $U_2$, where this inequality follows from Claim~\ref{Uupper}. 
		
		If each common neighbor of $w_1$ and $w_2$ in $U_2$ has degree at least $2$ in $G[U_2]$, note that each connected component in $G[U_2]$ has at most one vertex of degree at least $2$. Hence, $G[U_2]$ must have at least $2n-18$ components, each containing a vertex of degree at least $2$, contradicting $|U_2|\le 2n$. Thus, there exists a common neighbor $u_z$ of $w_1$ and $w_2$ in $U_2$ with degree at most $1$ in $G[U_2]$. Then the vertex $u_z$ has at least $2n-2-|W|\ge 2n-11$ neighbors in $U_1$. Consequently, $u_z$, $w_1$, and $w_2$ share at least $(2n-11)+(|U_1|-5)-|U_1|\ge 2n-16$ common neighbors in $U_1$. Selecting three such common neighbors together with $u_z,w_1,w_2$ induces a complete multipartite graph $K_{1,1,1,3}$, which contains $H_a$ as a subgraph, a contradiction.
		
		If $w_1$ and $w_2$ belong to $W_1$ and $W_3$, respectively, let $w_1\in W_1$ and $w_2\in W_3$. Then $w_1$ must be adjacent to either $u_2$ or $u_4$. By symmetry, assume $w_1u_2\in E(G)$. According to the selection of $W_3$, $w_2$ has at least $2n-9$ neighbors in $U_2$. Therefore, $w_2$ must have a neighbor $u_z$ in $U_2$ whose degree in $G[U_2]$ is at most 1; otherwise, this would contradict $|U_2|\le 2n$. Consequently, the number of neighbors of $u_z$ in $U_1$ is at least $2n-2-|W|\ge 2n-11$. Thus, the number of common neighbors of $u_z$, $w_1$, and $w_2$ in $U_1$ is at least $(2n-11)+(|U_1|-5)-|U_1|\ge 2n-16$. Selecting two common neighbors, together with $u_z$, $w_1$, $w_2$, and $u_2$, forms a subgraph in $G$ that contains $H_a$ as a subgraph, leading to a contradiction.
	\end{proof}
	
	Based on Claim~\ref{star}, every connected component in the graph $G[U_1\setminus (\{u_1,u_3\}\cup V(M_1))]$ is either an isolated vertex or a star. Denote the connected components of this graph as $D_1,\ldots,D_k$. Note that each connected component contains at most one vertex with degree greater than $1$ in $G[U_1\setminus (\{u_1,u_3\}\cup V(M_1))]$. Let $H=G[(U_1\cup \{w_1,\ldots,w_t\})\setminus (\{u_1\}\cup V(M_1))]$. We establish the following claim.
	
	\begin{claim}
		There exists a matching $M_2$ in $\overline{H}$ that covers all vertices in $H$ with degree greater than $1$.
	\end{claim}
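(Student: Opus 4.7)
The plan is to identify the ``degree greater than $1$'' vertices explicitly and then construct $M_2$ as a matching inside a clique of $\overline{H}$ that these vertices induce together with the auxiliary vertex $u_3$. By Claim~\ref{star} every non-trivial component of $G[U_1\setminus(\{u_1,u_3\}\cup V(M_1))]$ is a star, and the preceding sentence records that each component contains at most one vertex of degree greater than~$1$ in this subgraph: its center, provided the star has at least two edges. Let $S$ denote the set of such centers, so $S\subseteq U_1\setminus\{u_1,u_3\}$ and every vertex of $H$ that is to be covered belongs to $S$.

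First I would observe that any two distinct centers $c,c'\in S$ lie in different components of the induced subgraph $G[U_1\setminus(\{u_1,u_3\}\cup V(M_1))]$, so $cc'\notin E(G)$; this means $c$ and $c'$ are joined by an edge of $\overline{H}$, and hence $S$ induces a clique in $\overline{H}$. Next I would invoke the fact, established earlier in the proof, that $u_1$ and $u_3$ are isolated in $G[U_1]$. Since $u_3\in U_1\setminus\{u_1\}$ and $u_3\notin V(M_1)$ (because $M_1$ lives in $\overline{G}[\{w_1,\ldots,w_t\}\cup(U_1\setminus\{u_1,u_3\})]$), we have $u_3\in V(H)$, and $u_3$ is non-adjacent in $G$ to every vertex of $U_1\setminus\{u_1\}$, in particular to every vertex of $S$. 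Therefore $\{u_3\}\cup S$ induces a clique of size $|S|+1$ in $\overline{H}$.

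Inside this clique the matching $M_2$ can be built immediately. If $|S|$ is even I would pair up the vertices of $S$ arbitrarily into $|S|/2$ edges of $\overline{H}$. If $|S|$ is odd I would use $u_3$ to absorb the parity: pair $u_3$ with one vertex of $S$ and then pair up the remaining $|S|-1$ vertices of $S$ in $(|S|-1)/2$ further edges of $\overline{H}$. In either case every vertex of $S$ is covered by $M_2$, and every edge of $M_2$ lies in $\overline{H}$, which is exactly what the claim asks for.

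The argument is almost entirely structural, so I do not anticipate a serious obstacle. The whole construction rests on two leverage points already available from the earlier analysis: the structural decomposition in Claim~\ref{star}, which forces the ``hard'' vertices into distinct star components and hence into an independent set of $G$, and the observation that $u_3$ is isolated in $G[U_1]$, which supplies a spare vertex in $V(H)$ to handle the only possible parity issue.
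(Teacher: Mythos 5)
There is a genuine gap: you have misidentified the set of vertices that $M_2$ must cover. The graph $H$ is $G[(U_1\cup \{w_1,\ldots,w_t\})\setminus (\{u_1\}\cup V(M_1))]$, so besides $u_3$ and the star forest $G[U_1\setminus(\{u_1,u_3\}\cup V(M_1))]$ it may contain one leftover vertex $w_j\in\{w_1,\ldots,w_t\}\setminus V(M_1)$ (Claim~\ref{Usize} only guarantees there is at most one such vertex, not that there are none; in fact $t\ge 1$ always, since $|U_1|\le 2n$). By the choice of $M_1$ --- which maximizes the number of covered $w_i$'s --- this $w_j$ is adjacent in $G$ to \emph{every} vertex of $U_1\setminus(\{u_1,u_3\}\cup V(M_1))$, so it has degree roughly $2n-11$ in $H$. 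It is a vertex of $H$ of degree greater than $1$ that does not lie in your set $S$, and your matching leaves it uncovered; the subsequent step of the proof (that $G[(U_1\cup\{w_1,\ldots,w_t\})\setminus(\{u_1\}\cup V(M_1)\cup V(M_2))]$ has maximum degree at most $1$) then collapses. A secondary consequence of the same oversight is that when $w_j$ exists, every star leaf also has degree $2$ in $H$, so your premise that the centers are the only high-degree vertices of $H$ is doubly false, although covering the centers and $w_j$ does suffice for what the claim is actually used for.

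The omission is not cosmetic, because the only $\overline{H}$-neighbour of $w_j$ inside $V(H)$ is $u_3$ (recall $w_j\in W_1\cup W_3$ is non-adjacent to $u_3$, while $w_j$ is adjacent in $G$ to everything else in $V(H)$). Hence $M_2$ is forced to contain the edge $w_ju_3$, and $u_3$ is then unavailable as your parity absorber. For $k\ge 2$ this is repairable by matching each center to a leaf or isolated vertex of another component, which is exactly what the paper does. But when $k=1$ and $w_j$ exists, the unique star center $v_0$ has no admissible partner left at all, and the paper must \emph{rule this configuration out} by exhibiting a Haj\'os subgraph, via a case analysis on whether $w_j\in W_1$ or $w_j\in W_3$ that uses the common neighbourhoods in $U_2$ and the bound $|U_2|\le 2n$. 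That entire argument --- the real content of the claim --- is missing from your proposal.
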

	
	\begin{proof}
		When $k\ge 2$, for $i\in [k]$ and $D_{k+1}:=D_1$, if $D_i$ contains a vertex with degree greater than $1$, then this vertex has no edge to a vertex with degree at most $1$ in $D_{i+1}$. That is, there exists an edge between these two vertices in $\overline{H}$. Add this edge to $M_2$. If exactly one vertex in $\{w_1,\ldots,w_t\}$ is not in $V(M_1)$, assume without loss of generality that $w_1\notin V(M_1)$. Then $w_1u_3\in E(\overline{G})$, and we add the edge $w_1u_3$ to $M_2$. If $\{w_1,\ldots,w_t\}\subseteq V(M_1)$, the edge $w_1u_3$ does not need to be included in $M_2$. Clearly, $M_2$ covers all vertices in $H$ with degree greater than $1$.
		
		When $k=1$, the graph $G[U_1\setminus (\{u_1,u_3\}\cup V(M_1))]$ has only one connected component, which must be a star. Let the center of this star be $v_0$, and let two of its leaves be $v_1$ and $v_2$.
		
		If $\{w_1,\ldots,w_t\}\subseteq V(M_1)$, add the edge $v_0u_3$ to $M_2$. Since $v_0$ is the only vertex in $H$ with degree greater than $1$, $M_2$ covers all such vertices.
		
		If exactly one vertex in $\{w_1,\ldots,w_t\}$ is not in $V(M_1)$, assume without loss of generality that $w_1\notin V(M_1)$. By the choice of $M_1$, $w_1$ is adjacent to all vertices in $U_1\setminus (\{u_1,u_3\}\cup V(M_1))$, including $v_0$, $v_1$, and $v_2$. This implies that $w_1$ has at least $2n-11$ neighbors in $U_1$.
		
		If $w_1\in W_3$, by the definition of $W_3$, $w_1$ has at least $2n-11$ neighbors in $U_2$. Since $v_2$ has exactly one neighbor in $U_1$, it must have at least $2n-2-|W|\ge 2n-11$ neighbors in $U_2$. Thus, $w_1$ and $v_2$ share at least $2(2n-11)-|U_2|\ge 2n-22$ common neighbors in $U_2$. Let $u'$ be one such common neighbor. The sets $\{w_1,v_0,v_1\}$, $\{w_1,v_0,v_2\}$, $\{w_1,v_2,u'\}$, and $\{u_2,v_0,v_2\}$ each induce a triangle. Therefore, $G$ contains $H_a$ as a subgraph, a contradiction.
		
		If $w_1\in W_1$, then either $w_1u_2\in E(G)$ or $w_1u_4\in E(G)$. By symmetry, assume $w_1u_2\in E(G)$. The sets $\{w_1,v_0,v_1\}$, $\{w_1,v_0,v_2\}$, $\{w_1,v_2,u_2\}$, and $\{u_4,v_0,v_2\}$ each induce a triangle. Hence, $G$ contains $H_a$ as a subgraph, a contradiction.
	\end{proof}
	
	In the subgraph $G[(U_1\cup \{w_1,\ldots,w_t\})\setminus (\{u_1\}\cup V(M_1)\cup V(M_2))]$, each vertex has degree at most $1$, and this subgraph contains at least $2(n-|M_1|-|M_2|)$ vertices. Below, we demonstrate that $n-|M_1|-|M_2|\ge 2$.
	
	Since in the graph $G[U_1\setminus (\{u_1,u_3\}\cup V(M_1))]$, at most one-third of the vertices have degree greater than $1$, the vertex set $V(M_2)$ can contain at most two-thirds of this portion. Therefore, the remaining vertices number at least $|U_1\setminus (\{u_1,u_3\}\cup V(M_1))|/3\ge (2n-11)/3$. Combining this with $n\ge 111$, the subgraph $G[(U_1\cup \{w_1,\ldots,w_t\})\setminus (\{u_1\}\cup V(M_1)\cup V(M_2))]$ has at least $(222-11)/3\ge 70$ vertices.
	
	Hence, in the graph $\overline{G}[(U_1\cup \{w_1,\ldots,w_t\})\setminus (\{u_1\}\cup V(M_1)\cup V(M_2))]$, we can identify a matching consisting of $n-|M_1|-|M_2|$ edges, denoted by $M_3$. In $\overline{G}$, the edge set $M_1\cup M_2\cup M_3$ forms a matching of $n$ edges, and every vertex in this matching is adjacent to the vertex $u_1$. Consequently, $\overline{G}$ contains $F_n$ as a subgraph, leading to a final contradiction.
\end{proof}

\acknowledgements
The authors are grateful to the anonymous referee for helpful remarks. Y. Zhang was partially supported by the National Natural Science Foundation of China (NSFC) under Grant No. 11601527 and by the Natural Science Foundation of Hebei Province under Grant No. A2023205045.
\section*{Data Availability Statement}

No data was used or generated in this research.

\section*{Conflict of Interest}

The authors declare that they have no known competing financial interests or personal relationships that could have appeared to influence the work reported in this paper.

\nocite{*}
\bibliographystyle{abbrvnat}
\bibliography{reference}

\end{document}